\theoremstyle{plain}
\newtheorem{theorem}[subsection]{Theorem}
\newtheorem{lemma}[subsection]{Lemma}
\newtheorem{proposition}[subsection]{Proposition}
\newtheorem{corollary}[subsection]{Corollary}
\theoremstyle{definition}
\newtheorem{definition}[subsection]{Definition}
\theoremstyle{remark}
\newtheorem{remark}[subsection]{Remark}
\newcommand{\C}{\ensuremath{\mathsf{C}}}
\newcommand{\A}{\ensuremath{\mathsf{A}}}
\newcommand{\B}{\ensuremath{\mathsf{B}}}
\newcommand{\D}{\ensuremath{\mathsf{D}}}
\newcommand{\Frm}{\ensuremath{\mathsf{Frm}}}
\newcommand{\Ord}{\ensuremath{\mathsf{Ord}}}
\newcommand{\Inf}{\ensuremath{\mathsf{Inf}}}
\newcommand{\Top}{\ensuremath{\mathsf{Top}}}
\newcommand{\CH}{\ensuremath{\mathsf{CompHaus}}}
\newcommand{\Cls}{\ensuremath{\mathsf{Cls}}}
\newcommand{\App}{\ensuremath{\mathsf{App}}}
\newcommand{\Set}{\ensuremath{\mathsf{Set}}}
\newcommand{\ASet}{\ensuremath{\mathsf{AfSet}(A)}}
\newcommand{\AASet}{\ensuremath{\mathsf{AfAlgSet}(A)}}
\newcommand{\VSet}{\ensuremath{\mathsf{AfSet}}(V)}
\newcommand{\VCat}{V\mbox{-}\ensuremath{\mathsf{Cat}}}
\newcommand{\Vccd}{V\mbox{-}\ensuremath{\mathsf{ccd}}}
\newcommand{\UVCat}{(U,V)\mbox{-}\ensuremath{\mathsf{Cat}}}
\newcommand{\FcAlgA}{\mathsf{FcAlg}(A)}
\def\mathrmdef#1{\expandafter\def\csname#1\endcsname{{\rm#1}}}
\newcommand{\yoneda}{y}
\newcommand{\two}{\mathsf{2}}
\newcommand\downarrowtail{%
  \mathrel{\vcenter{\hbox{\rotatebox{-90}{\(\rightarrowtail\)}}}}}
\newcommand{\pararrows}[4]{%
  \begin{tikzcd}[ampersand replacement=\&]
    #1\ar[shift left]{r}{#3}\ar[shift right]{r}[swap]{#4} \& #2
  \end{tikzcd}
}
\begin{document}

\title{A variety of co-quasivarieties}

\author{Maria Manuel Clementino}
\thanks{Partially supported by \textit{Centro de Matemática da Universidade de Coimbra} (CMUC), funded by the Portuguese Government through FCT/MCTES, DOI 10.54499/UIDB/00324/2020.}
\address{University of Coimbra, CMUC, Department of Mathematics, 3000-143 Coimbra, Portugal}
\email{mmc@mat.uc.pt}

\author{Carlos Fitas}
\thanks{Partially supported by \textit{Centro de Matemática da Universidade de Coimbra} (CMUC), funded by the Portuguese Government through FCT/MCTES, DOI 10.54499/UIDB/00324/2020, and the FCT Ph.D. grant SFRH/BD/150460/2019.}
\address{University of Coimbra, CMUC, Department of Mathematics, 3000-143 Coimbra, Portugal}
\email{cmafitas@gmail.com}

\author{Dirk Hofmann}
\thanks{Partially supported by the Center for Research and Development in Mathematics and Applications (CIDMA) through the Portuguese Foundation for Science and Technology (FCT -- Fundação para a Ciência e a Tecnologia), references UIDB/04106/2020 and UIDP/04106/2020.}
\address{Center for Research and Development in Mathematics and Applications, Department of Mathematics, University of Aveiro, Portugal}
\email{dirk@ua.pt}

\keywords{Quasivariety, topological category, affine set}

\subjclass{18C10, 08C15, 18C05, 08A65, 18D20, 54B30}%06A07, 18A25, 18A30, 18N10, 18D20, 18E50}

\begin{abstract}
  It is shown that the duals of several categories of topological flavour, like the categories of ordered sets, generalised metric spaces, probabilistic metric spaces, topological spaces, approach spaces, are quasivarieties, presenting a common proof for all such results.
\end{abstract}

\maketitle

\section*{Introduction}

It is known since the work of Manes \cite{Man69} that the category \(\CH\) of compact Hausdorff spaces and continuous maps, traditionally belonging to the realm of topology, can be also understood as a variety of algebras (without rank). Approximately at the same time, Duskin \cite{Dus69} pointed out that also the dual category of \(\CH\) is a variety: by Urysohn's Lemma, the unit interval \([0,1]\) is a regular injective regular cogenerator of \(\CH\); moreover \(\CH^{\op}\) is exact (see \cite[Corollary~1.11 of Chapter~9]{BW85}). On the other hand, the category \(\Top\) of topological spaces and continuous maps is probably ``as far as one can get'' from algebra, so it might come ``as some surprise \dots\ that the situation is quite different when it comes to the dual category'' \cite{BP95}. In fact, once suspected, it is not hard to see from an abstract point of view that \(\Top^{\op}\) is a quasivariety: \(\Top\) is (co)complete, and the space \(3\) --- an amalgamation of the Sierpinski space and the two-element indiscrete space --- is a regular injective regular cogenerator of \(\Top\). It is also noted that \(\Top^{\op}\) is not exact. In concrete terms, \cite{BP95} gives an explicit description of the corresponding algebraic theory. Later, in \cite{AP97}, Adámek and Pedicchio described $\Top^\op$ as a regular epireflective subcategory of the variety of topological systems. Later, in \cite{PW99} Pedicchio and Wood pointed out that analogous arguments show that the dual of the category $\Ord$ of (pre)ordered sets and monotone maps is a quasivariety. Here we use the ideas of Adámek and Pedicchio \cite{AP97}, namely the equivalence between the category of topological systems and a special comma category, and results of Barr \cite{Bar72}, to give a common proof that a variety of categories of topological nature have quasivarieties as duals.

\section{The global construction}\label{sect:1}

We recall that a \emph{varietal category}, or just a \emph{variety}, is a category monadic over $\Set$, while a \emph{quasivariety} is a regular epireflective (full) subcategory of a variety, or, equivalently, a (full) subcategory closed under subobjects and products. The goal of this section is to identify certain comma categories of functors between varieties as (two-sorted) varieties. We start with the following observation.

\begin{lemma}\label{d:lem:1}
  Let \(I \colon\B\to\A\) be a functor. Then the functor
  \begin{displaymath}
    F \colon \A\mathrel{\downarrow}I \longrightarrow \A\times\B
  \end{displaymath}
  sending \(A\to IB\) to \((A,B)\) and acting as identity on morphisms has a right adjoint provided that \(\A\) has binary products. If \(I\) has a left adjoint and \(\A\) has binary coproducts, then \(F\) has a left adjoint.
\end{lemma}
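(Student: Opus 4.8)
The plan is to construct both adjoints explicitly and to verify the adjunctions by exhibiting natural bijections of hom-sets. The guiding observation is that each adjoint modifies only one of the two coordinates of an object of \(\A\downarrow I\), leaving the other untouched, in a manner dual to the other.

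For the right adjoint, assuming \(\A\) has binary products, I would define \(G\colon\A\times\B\to\A\downarrow I\) on objects by
\[
  G(A,B) = \bigl(A\times IB\xrightarrow{\pi_2} IB,\ B\bigr),
\]
that is, the second projection, with the \(\B\)-coordinate kept as \(B\); on morphisms \(G\) acts in the evident way. To check \(F\dashv G\) I would produce, for every \((A',B',\alpha')\) in \(\A\downarrow I\) and every \((A,B)\), a bijection between morphisms \((u,v)\colon(A',B',\alpha')\to G(A,B)\) — pairs \(u\colon A'\to A\times IB\), \(v\colon B'\to B\) with \(\pi_2\circ u = Iv\circ\alpha'\) — and pairs \((f,g)\in\A(A',A)\times\B(B',B)\). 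The assignment \((u,v)\mapsto(\pi_1\circ u,\, v)\), with inverse \((f,g)\mapsto(\langle f,\, Ig\circ\alpha'\rangle,\, g)\), does the job: the product universal property shows that the \(IB\)-component of \(u\) is completely determined by the comma-square condition, while its \(A\)-component is free. Naturality in both variables is routine, establishing the first claim.

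For the left adjoint, let \(J\dashv I\) with unit \(\eta\colon\id\to IJ\), and form the coproduct \(JA+B\) with injections \(\iota_1,\iota_2\). I would set
\[
  L(A,B) = \bigl(A\xrightarrow{\,I\iota_1\circ\eta_A\,} I(JA+B),\ JA+B\bigr),
\]
now keeping the \(\A\)-coordinate as \(A\), and define \(L\) on morphisms functorially. To verify \(L\dashv F\) I would establish, for each \((A',B',\alpha')\), a bijection between pairs \((u,v)\) with \(u\colon A\to A'\), \(v\colon JA+B\to B'\) and \(\alpha'\circ u = Iv\circ(I\iota_1\circ\eta_A)\), and pairs \((f,g)\in\A(A,A')\times\B(B,B')\), via \((u,v)\mapsto(u,\, v\circ\iota_2)\). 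The inverse sends \((f,g)\) to \((f,\,[\widetilde{\alpha'\circ f},\, g])\), where \(\widetilde{\alpha'\circ f}\colon JA\to B'\) is the \(J\dashv I\)-transpose of \(\alpha'\circ f\colon A\to IB'\).

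The only step that is more than bookkeeping — and the place I expect the real content to sit — is the interplay between the comma-square condition and the adjunction transpose. The identity \(\alpha'\circ u = Iv\circ I\iota_1\circ\eta_A = I(v\circ\iota_1)\circ\eta_A\) says precisely that \(v\circ\iota_1\) is the transpose of \(\alpha'\circ u\); hence the \(\iota_1\)-component of \(v\) is forced by \(u\), while the \(\iota_2\)-component is free. This is exactly the universal property of \(JA+B\): giving \(v\) amounts to giving \(v\circ\iota_1\colon JA\to B'\) and \(v\circ\iota_2\colon B\to B'\) independently, the former being pinned down by \(u\). Recognising that the coproduct coordinate of \(L(A,B)\) must be this object — so that the left adjoint \(J\) is used to transport \(A\) before forming the coproduct with \(B\) — is the crux; once the formula is in hand, checking the bijection and its naturality is mechanical.
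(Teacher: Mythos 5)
Your construction of both adjoints coincides exactly with the paper's: the right adjoint sends \((A,B)\) to the projection \(\pi_2\colon A\times IB\to IB\) and the left adjoint sends \((A,B)\) to \(I\iota_{JA}\circ\eta_A\colon A\to I(JA+B)\), and your verification via hom-set bijections is just the explicit form of the unit-based check the paper leaves as routine. The proposal is correct and takes essentially the same approach as the paper.
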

\begin{proof}
  The right adjoint $R$ of $F$ is defined by $R(A,B)=(\xymatrix{A\times IB\ar[r]^-{\pi_2}&IB})$, and $R(f,h)=(f\times Ih,h)$, which clearly makes the diagram
  \[
    \xymatrix{A\times IB\ar[d]_{f\times Ih}\ar[r]^-{\pi_2}&IB\ar[d]^{Ih}\\
      A'\times IB'\ar[r]_-{\pi_2}&IB'}
  \]
  commute. Again, it is straightforward to show that $F\dashv R$, with unit $\gamma$ given by $\gamma_g=(\langle 1_A,g\rangle,1_B)$ as in the diagram
  \[
    \xymatrix{A\ar[r]^-g\ar[d]_{\langle 1_A,g\rangle}&IB\ar[d]^{I1_B}\\
      A\times IB\ar[r]_-{\pi_2}&IB.}
  \]
  Assume now that \(I\) has a left adjoint $J\colon \A\to\B$, with unit $\eta$. Then define
  \[
    L(A,B)
    =
    (A\xrightarrow{\;\eta_{A}\;}IJA\xrightarrow{\;I\iota_{JA}\;}I(JA+B))
  \]
  where $\iota_{JA}\colon JA\to JA+B$ is the coproduct projection, and define $L(f,h)=(f,Jf+h)$ as in the commutative diagram
  \[
    \xymatrix{A\ar[r]^-{\eta_A}\ar[d]_f
      &IJA\ar[d]^{IJf}\ar[r]^-{I\iota_{JA}}&I(JA+B)\ar[d]^{I(Jf+h)}\\
      A'\ar[r]_-{\eta_{A'}}&IJA'\ar[r]_-{I\iota_{JA'}}&I(JA'+B')}
  \]
  It is straightforward to check that $L$ is left adjoint to $F$, with unit $\rho$ given by $\rho(A,B)=(1_A,\iota_B)\colon(A,B)\to(A,JA+B)$, where $\iota_B\colon B\to JA+B$ is the coproduct coprojection.
\end{proof}

Assume now that \(\A\) and \(\B\) are varieties, with forgetful functor \(G \colon\A\to\Set\) and \(H \colon\B\to\Set\), respectively. Then also \(G\times H \colon\A\times\B\to\Set\times\Set\) is monadic, and so is \(F \colon \A\mathrel{\downarrow}I\to\A\times\B\), for every right adjoint functor \(I \colon\B\to\A\). In fact, \(\A\mathrel{\downarrow}I\) is cocomplete, \(F\) is left and right adjoint and therefore preserves in particular coequalizers, and one easily verifies that \(F\) reflects isomorphisms. We wish to conclude that then also \(\A\mathrel{\downarrow}I\) is monadic over \(\Set\times\Set\); however, monadic functors are not stable under composition in general. In \cite{Bar72} Barr presented a criterion to identify monadic functors as composite of functors with suitable properties which we recall next.

\begin{definition}
  A parallel pair of morphisms \(\pararrows{X}{Y}{f}{g}\) in a category $\C$ is said to be a \emph{split coequalizer pair} (or simply a \emph{split pair}) if there exists an object $Z$ in $\C$ and morphisms $h\colon Y\to Z$, $k\colon Z\to Y$, $s\colon Y\to X$
  \begin{displaymath}
    \begin{tikzcd}
      X\ar[shift left]{r}{f}\ar[shift right]{r}[swap]{g}
      & Y\ar[shift right]{r}[swap]{h}
      \ar[bend left=50]{l}{s}
      & Z\ar[shift right]{l}[swap]{k}
    \end{tikzcd}
  \end{displaymath}
  % \[
  %   \xymatrix{X\ar@<.4ex>[r]^f\ar@<-.4ex>[r]_g&
  %     Y\ar@/^.9pc/[l]^s\ar@<-.4ex>[r]_h&Z\ar@<-.4ex>[l]_k}
  % \]
  such that $hf=hg$, $gs=1_Y$, $fs=kh$.

  If $F\colon\C\to\D$ is a functor, \(\pararrows{X}{Y}{f}{g}\) is said to be an \emph{$F$-split pair} if \(\pararrows{FX}{FY}{Ff}{Fg}\) is a split pair in $\D$.
\end{definition}

\begin{proposition}[\cite{Bar72}]
  \label{d:prop:1}
  A functor \((A\xrightarrow{\;U\;}D)=(A\xrightarrow{\;F\;}B\xrightarrow{\;G\;}C\xrightarrow{\;H\;}D)\) is a monadic functor provided that:
  \begin{enumerate}
  \item $\A$ has coequalizers of $U$-split pairs;
  \item $H$ creates limits and every $H$-split pair is a split pair;
  \item $G$ creates limits, and coequalizers of $G$-split pairs exist in $\B$ and are preserved by $G$;
  \item $F$ creates limits and preserves all coequalizers;
  \item $HGF$ has a left adjoint.
  \end{enumerate}
\end{proposition}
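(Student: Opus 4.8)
The plan is to verify the hypotheses of Beck's monadicity theorem for the composite \(U=HGF\). Recall that a functor with a left adjoint is monadic as soon as it is conservative and its domain has, and the functor preserves, coequalizers of \(U\)-split pairs (these together yield that \(U\) creates such coequalizers). The left adjoint is hypothesis~(5), so it remains to establish conservativity together with the existence and preservation of coequalizers of \(U\)-split pairs.

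For conservativity I would use that a functor creating limits reflects isomorphisms: if \(Uf\) is invertible then, viewing \(f\colon a\to b\) as a cone over the one-object diagram picking out \(b\), the image cone is a limit cone, so its unique lift --- namely \(f\) itself --- is a limit cone, and hence \(f\) is invertible. As \(F\), \(G\), \(H\) each create limits by (2)--(4), each is conservative, and conservativity is stable under composition; thus \(U=HGF\) reflects isomorphisms.

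The core of the argument is preservation. Let \(f,g\colon X\to Y\) be a \(U\)-split pair in \(\A\); by (1) it has a coequalizer \(c\colon Y\to Z\). I would now relay the splitting up the tower. Since \((HGFf,HGFg)\) is split in \(\D\), the pair \((GFf,GFg)\) is \(H\)-split, hence by (2) a split pair in \(\C\); in particular it carries a split coequalizer, which is absolute. Consequently \((Ff,Fg)\) is a \(G\)-split pair in \(\B\), so by (3) it has a coequalizer that \(G\) preserves. On the other hand \(F\) preserves all coequalizers by (4), so \(Fc\) is a coequalizer of \((Ff,Fg)\); by uniqueness it agrees with the one furnished by (3), whence \(GFc\) is a coequalizer of \((GFf,GFg)\) in \(\C\). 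But \((GFf,GFg)\) is a split pair, and any coequalizer of a split pair is (isomorphic to) its split coequalizer, hence absolute. Absolute coequalizers are preserved by every functor, in particular by \(H\), so \(HGFc=Uc\) is a coequalizer of \((Uf,Ug)\) in \(\D\). This shows that \(U\) preserves coequalizers of \(U\)-split pairs.

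Finally, conservativity upgrades preservation to reflection: any competing coequalizing cocone \(c'\) receives a comparison map from \(c\) whose \(U\)-image is invertible, hence which is itself invertible, so \(c'\) is again a coequalizer. Thus \(U\) creates coequalizers of \(U\)-split pairs, and together with hypothesis~(5) Beck's theorem gives that \(U\) is monadic. I expect the main obstacle to be the relay step --- transporting the split structure down through \(H\) via (2) and matching the coequalizer produced by (3) with the one preserved by \(F\) --- and in particular the observation that, although \(H\) is only assumed to create limits, it nevertheless preserves the final coequalizer because a coequalizer of a split pair is absolute.
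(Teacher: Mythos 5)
Your argument is correct, and it is essentially the argument of Barr that the paper is quoting: the paper itself offers no proof of this proposition, citing \cite{Bar72} instead, so there is nothing to diverge from. Your reduction to the precise form of Beck's theorem (left adjoint, conservativity from creation of limits, existence plus preservation of coequalizers of $U$-split pairs), and in particular the relay step in which conditions (2)--(4) push the coequalizer of $(f,g)$ up to a coequalizer of the \emph{split} pair $(GFf,GFg)$ in $\C$, which is then absolute and hence preserved by $H$ without any coequalizer hypothesis on $H$, is exactly the intended mechanism. One small point of care: with the definition of split pair as stated in the paper (the equations $hf=hg$, $gs=1_Y$, $fs=kh$, with no requirement $hk=1_Z$), the morphism $h$ need not itself be a coequalizer of $(f,g)$, so your parenthetical ``any coequalizer of a split pair is (isomorphic to) its split coequalizer'' is not literally available. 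The conclusion you need survives, however: if $e$ is a coequalizer of such a pair, then every $q$ with $qf=qg$ satisfies $q=qkh$, writing $h=ue$ gives $eku=1$ by epiness of $e$, so $e$ is split epi with section $ku$ and every coequalizing $q$ factors through $e$ via equations preserved by any functor; hence the coequalizer is absolute anyway, and your proof goes through.
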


\begin{corollary}
  \label{d:cor:1}
  Let \(G \colon\A\to\Set\) and \(H \colon\B\to\Set\) be monadic functors and \(I \colon\B\to\A\) be a right adjoint functor. Then the composite functor
  \begin{displaymath}
    \A\mathrel{\downarrow}I
    \xrightarrow{\quad F\quad}
    \A\times\B
    \xrightarrow{\quad G\times H\quad}
    \Set\times\Set
  \end{displaymath}
  is monadic.
\end{corollary}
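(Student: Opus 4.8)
The plan is to apply Barr's criterion (Proposition~\ref{d:prop:1}) to the composite \(U=(G\times H)\circ F\). Since that proposition is stated for a three-fold composite, I would pad our two-step factorization with an identity functor at the top, writing
\[
  \A\mathrel{\downarrow}I \xrightarrow{\;F\;}\A\times\B\xrightarrow{\;G\times H\;}\Set\times\Set\xrightarrow{\;\id\;}\Set\times\Set,
\]
and take Barr's outer functor to be \(\id_{\Set\times\Set}\), his middle functor to be \(G\times H\), and his inner functor to be our \(F\). With this choice condition~(2) is immediate, since the identity creates limits and an \(\id\)-split pair is by definition a split pair.

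Next I would dispatch the conditions concerning \(F\). By Lemma~\ref{d:lem:1}, since \(I\) is a right adjoint it has a left adjoint \(J\), and since \(\A\) is a variety it has binary coproducts; hence \(F\) has a left adjoint and in particular preserves all coequalizers. That \(F\) creates limits is where I would use that \(I\), being a right adjoint, preserves limits: given a diagram \((A_j\to IB_j)\) in \(\A\mathrel{\downarrow}I\) whose image \((A_j,B_j)\) has a limit \((A_\infty,B_\infty)\) in \(\A\times\B\), the maps \(A_\infty\to A_j\to IB_j\) form a cone that factors uniquely through \(IB_\infty=\lim_j IB_j\), producing the unique lift \(A_\infty\to IB_\infty\), which is then seen to be a limit in \(\A\mathrel{\downarrow}I\). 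This gives condition~(4). Condition~(5) follows at once, as \(F\) has a left adjoint (Lemma~\ref{d:lem:1}) and \(G\times H\) has one too (the product of the free functors of the two varieties), so their composite \(U\) does as well.

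For condition~(3) I would argue directly that \(G\times H\) satisfies the Beck-type requirements. It creates limits because \(G\) and \(H\) each create limits (being monadic) and limits in the product categories are formed componentwise. Moreover a \((G\times H)\)-split pair in \(\A\times\B\) is precisely a pair whose first component is \(G\)-split in \(\A\) and whose second component is \(H\)-split in \(\B\), since split pairs in a product category split componentwise; by Beck's theorem applied to the monadic functors \(G\) and \(H\), the coequalizers of these component pairs exist and are preserved, so coequalizers of \((G\times H)\)-split pairs exist in \(\A\times\B\) and are preserved by \(G\times H\). Finally, condition~(1) holds because \(\A\mathrel{\downarrow}I\) is cocomplete, as already noted, so all coequalizers exist. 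With all five hypotheses verified, Proposition~\ref{d:prop:1} yields that \(U\) is monadic.

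The routine parts here are the componentwise bookkeeping for \(G\times H\) and the verification of the adjunctions. The one genuinely load-bearing point, which I would be careful to isolate, is that \(F\) creates limits: this relies entirely on \(I\) being a right adjoint, so that \(I\) preserves the limits through which the lift is constructed, and it is exactly the place where the hypothesis that \(I\) is a \emph{right adjoint} — rather than merely a functor — enters the argument.
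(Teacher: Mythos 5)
Your strategy coincides with the paper's: \(\A\downarrow I\) is cocomplete, \(F\) has both adjoints by Lemma~\ref{d:lem:1}, \(F\) creates limits because \(I\) preserves them, and Barr's criterion (Proposition~\ref{d:prop:1}) is applied with the remaining slot filled by an identity functor. There is, however, one step that fails as written: you justify ``\(F\) preserves all coequalizers'' by the fact that \(F\) has a \emph{left} adjoint. A functor with a left adjoint preserves limits, not colimits; a left adjoint gives you nothing about coequalizers. What condition~(4) of Barr's criterion actually needs is the \emph{right} adjoint of \(F\), which the first half of Lemma~\ref{d:lem:1} supplies because \(\A\), being a variety, has binary products --- this is exactly what the paper invokes when it says \(F\) has both a left and a right adjoint and therefore preserves both limits and colimits. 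The left adjoint of \(F\) is still needed, but for condition~(5), namely to equip the total composite with a left adjoint (and it is the place where the hypothesis that \(I\) is a right adjoint enters via \(J\dashv I\); note that your ``load-bearing point'' about creation of limits uses only that \(I\) preserves limits, which is again the right-adjoint hypothesis). So the gap is repaired at no cost from the same lemma, but as stated the verification of condition~(4) rests on a false implication. Everything else --- the componentwise treatment of \(G\times H\), the identity padding for condition~(2), and the construction of the lifted limit for the creation claim --- is correct and matches the paper's argument.
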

\begin{proof}
  First note that with \(G \colon\A\to\Set\) and \(H \colon\B\to\Set\) also \(G\times H \colon\A\times\B\to\Set\times\Set\) is monadic. As $\A$ and $\B$ are cocomplete, $\A\downarrow I$ is a cocomplete category. By Lemma~\ref{d:lem:1}, the functor $F\colon\A\downarrow I\to\A\times B$ has both a left and a right adjoint. Therefore $F$ preserves both limits and colimits. Since \(I\) preserves limits, one easily verifies that \(F\) creates limits. Therefore the assertion follows from Proposition~\ref{d:prop:1}.
\end{proof}

\begin{remark}
  \label{d:rem:1}
  Recall that every concrete functor \(I \colon\B\to\A\) between varieties has a left adjoint. Hence, in this situation, we can think of \(\A\mathrel{\downarrow}I\) as a category of two sorted algebras: with sorts \(A\) and \(B\), with the algebraic theory of \(G \colon\A\to\Set\) at sort \(A\) and of \(H \colon\B\to\Set\) at sort \(B\), and an operation of type \(A\to B\) together with the equations identifying this operation as a homomorphism.
\end{remark}

\begin{proposition}\label{prop}
  If $\A$ and $\B$ are varieties and $I\colon\B\to\A$ is a right adjoint functor, then the full subcategory $\A\downarrowtail I$ of the comma category $\A\downarrow I$ consisting of those objects \(A\xrightarrow{\;g\;} IB\) with $g$ monic is a regular epireflective subcategory of $\A\downarrow I$.
\end{proposition}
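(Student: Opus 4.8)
The plan is to construct the reflector explicitly by factoring the structure map. By Corollary~\ref{d:cor:1} the comma category $\A\downarrow I$ is a variety, hence it is complete, cocomplete, well-powered and carries the (regular epi, mono) factorisation system, in which a morphism $(u,v)$ is a regular epimorphism precisely when both $u$ and $v$ are regular epimorphisms (equivalently surjective) in $\A$ and $\B$, and is a monomorphism precisely when both $u$ and $v$ are monomorphisms. The subcategory $\A\downarrowtail I$ is exactly the class of objects whose structure map lies in the right-hand class of this factorisation system, so the natural move is: given an object $g\colon A\to IB$, factor $g$ in $\A$ as
\[ A \xrightarrow{\;e\;} \bar A \xrightarrow{\;m\;} IB \]
with $e$ a regular epimorphism and $m$ a monomorphism, and declare the reflection to be $R(g)=(\bar A\xrightarrow{m}IB)$, which lies in $\A\downarrowtail I$ since $m$ is monic, with reflection unit $\eta_g=(e,1_B)\colon(A\xrightarrow{g}IB)\to(\bar A\xrightarrow{m}IB)$. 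This is a well-defined morphism of the comma category because $m\circ e=g=I(1_B)\circ g$.

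Next I would verify the universal property. Let $(A_1\xrightarrow{g_1}IB_1)$ be an object of $\A\downarrowtail I$, so $g_1$ is monic, and let $(u,v)\colon(A\xrightarrow{g}IB)\to(A_1\xrightarrow{g_1}IB_1)$ be any morphism, so that $g_1\circ u=Iv\circ g=Iv\circ m\circ e$. Because $e$ is a regular epimorphism and $g_1$ is a monomorphism, the diagonal fill-in property in $\A$ supplies a unique $u'\colon\bar A\to A_1$ with $u'\circ e=u$ and $g_1\circ u'=Iv\circ m$; the latter equation says exactly that $(u',v)$ is a morphism $(\bar A\xrightarrow{m}IB)\to(A_1\xrightarrow{g_1}IB_1)$ of the comma category, and by construction $(u',v)\circ\eta_g=(u,v)$. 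Uniqueness is immediate: any such factorisation must have second component $v$, and its first component is forced by $u'\circ e=u$ together with $e$ being epic. Thus $\eta_g$ is a reflection of $(A\xrightarrow{g}IB)$ into $\A\downarrowtail I$, so the subcategory is reflective.

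It remains to check that each unit $\eta_g=(e,1_B)$ is a regular epimorphism in $\A\downarrow I$, which is the one point genuinely requiring the two-sorted structure rather than componentwise reasoning, and which I expect to be the main obstacle; everything else is just the orthogonality between regular epis and monos. I would handle it in either of two ways. The quick route uses the characterisation of regular epimorphisms recalled above: since $e$ is a regular epimorphism in $\A$ and $1_B$ is an isomorphism, $(e,1_B)$ is a regular epimorphism in $\A\downarrow I$. If one prefers to avoid that characterisation, one can exhibit $\eta_g$ directly as a coequaliser: writing $e$ as the coequaliser of a pair $r,s\colon K\rightrightarrows A$ in $\A$, the equalities $g\circ r=m\circ e\circ r=m\circ e\circ s=g\circ s$ turn $r,s$ into parallel morphisms $(r,1_B),(s,1_B)\colon(K\xrightarrow{gr}IB)\rightrightarrows(A\xrightarrow{g}IB)$ of the comma category, and a routine verification — again transporting the comma condition across $e$ by epicness — shows that $(e,1_B)$ is their coequaliser. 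Either way the units are regular epimorphisms, so $\A\downarrowtail I$ is a regular epireflective subcategory. As a conceptual cross-check, the same conclusion follows from the closure criterion for quasivarieties recalled in the text: $I$ preserves products (being a right adjoint), so $\A\downarrowtail I$ is closed under products, and it is closed under subobjects, since a mono into an object with monic structure map again has monic structure map.
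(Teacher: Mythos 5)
Your proof is correct and follows essentially the same route as the paper: factor the structure map $g$ as a regular epimorphism followed by a monomorphism in $\A$, take $(e,1_B)$ as the unit, and use orthogonality (diagonal fill-in) for the universal property. The only difference is that you additionally spell out why the unit is a regular epimorphism in $\A\downarrow I$ --- a point the paper's proof leaves implicit --- and both of your arguments for it (the componentwise characterisation via Corollary~\ref{d:cor:1}, or exhibiting $(e,1_B)$ directly as a coequaliser) are sound.
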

\begin{proof}
  First of all we note that, as a variety, $\A$ admits the (regular epi, mono)-factorization system. For each object \(A\xrightarrow{\;g\;}IB\), let \(A\xrightarrow{\;g\;}IB=A\xrightarrow{\;e\;}X\xrightarrow{\;m\;}IB\) be its (regular epi,mono)-factorization. Then $(e,1_B)$ is the regular epireflection of $A$ in $\A\downarrowtail I$: indeed, given a morphism $(f,h)\colon(A\xrightarrow{\;g\;}IB)\to(X'\xrightarrow{m'}IB')$ with $m'$ monic, commutativity of the diagram
  \begin{displaymath}
    \begin{tikzcd}
      A %
      \ar{rr}{g} %
      \ar{rd}[swap]{e} \ar{dd}[swap]{f} %
      && IB %
      \ar{dd}{Ih} \\
      & X\ar{ru}[swap]{m}\ar[dotted]{dl}{d}\\
      X' %
      \ar{rr}[swap]{m'} %
      && IB' %
    \end{tikzcd}
  \end{displaymath}
  % \[
  %   \xymatrix{A\ar[rr]^-g\ar[rd]_e\ar[dd]_f&&IB\ar[dd]^{Ih}\\
  %     &X\ar[ru]_m\\
  %     X'\ar[rr]^-{m'}&&IB'}
  % \]
  gives, by orthogonality of the factorization system, a unique morphism $d\colon X\to X'$ such that $m'\cdot d=Ih\cdot m$ and $d\cdot e=f$. This yields a (unique) morphism $(d,h)\colon(X\xrightarrow{\;m\;}IB)\to(X'\xrightarrow{\;m'\;}IB')$ so that $(d,h)\cdot (e,1_B)=(f,h)$ as required.
\end{proof}

In the situation of Remark~\ref{d:rem:1}, the category $\A\downarrowtail I$ can be described as the category of those algebras \((g \colon A\to IB)\) of \(\A\downarrow I\) satisfying the implication
\begin{displaymath}
  \forall x,y\,.\, (g(x)=g(y)\implies x=y).
\end{displaymath}

The main point to make here is that, making again use of \cite{Bar72}, $\A\downarrowtail I$ can be also presented as a quasivariety over \(\Set\).

\begin{theorem}
  \label{th}
  If $\A$ and $\B$ are varieties admitting constants and $I\colon\B\to\A$ is a right adjoint functor, then the comma category $\A\downarrow I$ is monadic over $\Set$.
\end{theorem}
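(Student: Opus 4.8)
The plan is to reduce the statement to the monadicity over \(\Set\) of the product variety \(\A\times\B\), and then to combine this with the functor \(F\) of Lemma~\ref{d:lem:1} by means of Barr's criterion (Proposition~\ref{d:prop:1}). It is exactly in this first reduction that the hypothesis of admitting constants is needed; the rest runs parallel to Corollary~\ref{d:cor:1}, with the monadicity of \(\A\times\B\) over \(\Set\) playing the role there played by the monadicity of \(G\times H\) over \(\Set\times\Set\).

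First I would consider the functor
\[
  U'\colon\A\times\B\longrightarrow\Set,\qquad (A,B)\longmapsto GA\times HB,\quad (f,h)\longmapsto Gf\times Hh.
\]
It has a left adjoint \(S\mapsto(\Phi S,\Psi S)\), where \(\Phi\dashv G\) and \(\Psi\dashv H\) are the free-algebra functors, since \(\Set(S,GA\times HB)\cong\A(\Phi S,A)\times\B(\Psi S,B)\). As \(\A\) and \(\B\) admit constants, every set \(GA\) and \(HB\) is non-empty; hence a bijection \(Gf\times Hh\) forces both \(Gf\) and \(Hh\) to be bijections, and since \(G\) and \(H\) are monadic, \(U'\) reflects isomorphisms. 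By the crude form of Beck's monadicity theorem it remains to find coequalizers of \(U'\)-split pairs in the (cocomplete) category \(\A\times\B\) and to show that \(U'\) preserves them; this preservation is the main obstacle.

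The key claim is that every \(U'\)-split pair is already split in each coordinate, that is, is a \((G\times H)\)-split pair. Here the constants enter decisively: a constant of \(\A\) gives, naturally in \(A\), a point \(a_0\in GA\) fixed by every homomorphism, so that for a parallel pair \((f_i,h_i)\colon(A,B)\rightrightarrows(A',B')\) one has \(Gf_1(a_0)=Gf_2(a_0)=a_0'\), and symmetrically in \(\B\). Using the resulting points one exhibits \((Gf_1,Gf_2)\colon GA\rightrightarrows GA'\) as a retract, in the category of parallel pairs, of \((Gf_1\times Hh_1,\,Gf_2\times Hh_2)\): the sections \(a\mapsto(a,b_0)\) and \(a'\mapsto(a',b_0')\) together with the product projections furnish the retraction, the required squares commuting precisely because the second coordinate is carried onto the constant \(b_0'\). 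Since split-coequalizer data transports along retracts, a retract of a split pair is split; hence \((Gf_1,Gf_2)\) and, symmetrically, \((Hh_1,Hh_2)\) are split pairs in \(\Set\).

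Granting the claim, \(G\times H\) creates the coequalizer of the pair in \(\A\times\B\), presenting it as the pair of split coequalizers \(GA'\to Q_A\) and \(HB'\to Q_B\); and as the product of two split coequalizers is a split coequalizer of the product pair, \(U'=({-}\times{-})\circ(G\times H)\) preserves it. Beck's theorem then yields that \(U'\) is monadic, and in particular creates limits. It remains to apply Proposition~\ref{d:prop:1} to \(\A\downarrow I\xrightarrow{\,F\,}\A\times\B\xrightarrow{\,U'\,}\Set\) with the identity functor in the last position: the second hypothesis is trivial for the identity, the third holds because \(U'\) is monadic, the first because \(\A\downarrow I\) is cocomplete, and the fourth, together with the left adjoint demanded by the fifth, follow from Lemma~\ref{d:lem:1}, which gives \(F\) both adjoints — so \(F\) creates limits (as in Corollary~\ref{d:cor:1}) and preserves all coequalizers — while the left adjoint of \(U'\) was built above. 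Thus \(U'\circ F\colon\A\downarrow I\to\Set\) is monadic. I expect the retract argument, and the careful bookkeeping of constants it relies on, to be the only genuinely delicate point; without constants \(U'\) need not even reflect isomorphisms, as the failure of \(\Set\times\Set\) to be monadic over \(\Set\) already shows.
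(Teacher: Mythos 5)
Your proof is correct and follows the same overall strategy as the paper --- factor the forgetful functor through $\A\times\B$ and invoke Barr's composite-monadicity criterion (Proposition~\ref{d:prop:1}) --- but it handles the crucial middle step by a genuinely different argument. The paper interposes the category $(\Set\times\Set)^{*}$ of pairs of sets that are both empty or both non-empty, writes the forgetful functor as $P\circ(G\times H)\circ F$, and simply cites Barr's Theorem~2 for the fact that $P\colon(\Set\times\Set)^{*}\to\Set$ creates limits and turns $P$-split pairs into split pairs; the hypothesis on constants enters only to ensure that $G\times H$ restricts to a monadic functor into this subcategory. You instead prove directly that $U'=P\circ(G\times H)\colon\A\times\B\to\Set$ is monadic, and the heart of your argument --- exhibiting each coordinate $(Gf_1,Gf_2)$ of a $U'$-split pair as a retract, in the category of parallel pairs, of the product pair via the sections $a\mapsto(a,b_0)$ supplied by the constants, and noting that split-pair data transports along retracts --- is a correct, self-contained substitute for that citation; your checks that $U'$ reflects isomorphisms and that a product of split coequalizer diagrams is again one are also fine. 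What the paper's route buys is brevity and a clean isolation of the set-theoretic content in the auxiliary category $(\Set\times\Set)^{*}$; what yours buys is an explicit, elementary account of exactly where the constants are used. One cosmetic quibble: what you invoke is not the ``crude'' monadicity theorem (which concerns reflexive coequalizers) but the standard variant of Beck's precise theorem (left adjoint, reflection of isomorphisms, existence and preservation of coequalizers of split pairs); the argument itself is unaffected.
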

\begin{proof}
  Similarly to Corollary~\ref{d:cor:1}, we obtain immediately that the composite
  \begin{displaymath}
    \A\downarrow I\xrightarrow{\quad F\quad}
    \A\times B\xrightarrow{\quad G\times H\quad}
    (\Set\times\Set)^{*}
  \end{displaymath}
  is monadic, where
  \begin{itemize}
  \item $F(A\to IB)=(A,B)$ and $F(f,h)=(f,h)$;
  \item $G\colon\A\to\Set$ and $H\colon\B\to\Set$ are monadic functors, and $G\times H$ is their product restricted to the full subcategory of $\Set\times\Set$ consisting of pairs $(X,Y)$ of sets where either both are empty or both are non-empty.
  \end{itemize}
  Consider now also the product functor
  \begin{displaymath}
    (\Set\times\Set)^{*}\xrightarrow{\quad P\quad}\Set.
  \end{displaymath}
  The functor $P$ has a left adjoint, namely the diagonal functor $\Set\to(\Set\times\Set)^*$. Furthermore, that $P$ creates limits and every $P$-split pair is a split pair follows from \cite[Theorem 2]{Bar72}. Therefore, by Proposition~\ref{d:prop:1}, the composite
  \begin{displaymath}
    \A\downarrow I\xrightarrow{\quad F\quad}
    \A\times B\xrightarrow{\quad G\times H\quad}
    (\Set\times\Set)^{*}\xrightarrow{\quad P\quad}\Set
  \end{displaymath}
  is monadic.
\end{proof}

Hence, together with Proposition~\ref{prop}, we obtain

\begin{theorem}
  If $\A$ and $\B$ are varieties admitting constants and $I\colon\B\to\A$ is a right adjoint functor, then $\A\downarrowtail I$ is a quasivariety over \(\Set\).
\end{theorem}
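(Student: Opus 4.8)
The plan is to combine the two preceding results directly, invoking the definition of quasivariety recalled at the beginning of this section. First I would observe that the hypotheses of both Theorem~\ref{th} and Proposition~\ref{prop} are met under the present assumptions: that \(\A\) and \(\B\) are varieties admitting constants and that \(I\) is a right adjoint functor is exactly what Theorem~\ref{th} requires, while Proposition~\ref{prop} needs only that \(\A,\B\) be varieties and \(I\) a right adjoint. Thus Theorem~\ref{th} applies and yields that \(\A\downarrow I\) is monadic over \(\Set\), so that \(\A\downarrow I\) is a variety.

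Next I would apply Proposition~\ref{prop} to conclude that \(\A\downarrowtail I\) is a regular epireflective full subcategory of \(\A\downarrow I\). Since a quasivariety was defined to be precisely a regular epireflective full subcategory of a variety, these two facts together immediately identify \(\A\downarrowtail I\) as a quasivariety over \(\Set\), which is the assertion.

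I do not expect any genuine obstacle at this stage: all of the substantive work has already been carried out in the earlier statements, namely the monadicity of \(\A\downarrow I\) over \(\Set\) via Barr's composite criterion (Proposition~\ref{d:prop:1}) and the construction of the regular epireflection of \(\A\downarrow I\) onto \(\A\downarrowtail I\) through the (regular epi, mono)-factorization system of \(\A\). The only point deserving a moment's attention is the bookkeeping of hypotheses: one should confirm that the ``admitting constants'' condition — which is what allows the product functor \(P\colon(\Set\times\Set)^{*}\to\Set\) in the proof of Theorem~\ref{th} to collapse the two sorts into one — is indeed included here, so that the monadicity, and hence the varietal structure, is genuinely over \(\Set\) rather than merely over \(\Set\times\Set\).
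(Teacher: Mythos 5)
Your proposal is correct and matches the paper exactly: the theorem is obtained by combining Theorem~\ref{th} (monadicity of \(\A\downarrow I\) over \(\Set\)) with Proposition~\ref{prop} (regular epireflectivity of \(\A\downarrowtail I\) in \(\A\downarrow I\)) and invoking the definition of quasivariety given at the start of Section~\ref{sect:1}. Your remark on where the ``admitting constants'' hypothesis is actually used is accurate and appropriate.
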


In the remaining sections we will show that the results of this section can be used to deduce that several categories -- dual to categories of topological nature -- are quasivarietal.

\section{Diers' affine algebraic sets}

Motivated by questions in geometry, the notion of affine set was introduced by Diers in \cite{Die96, Die99}. Among other results, the author proves that the category of affine sets is topological over \(\Set\), and that, moreover, many categories studied in topology are instances of this notion. Using the results of Section \ref{sect:1}, it is easy to deduce that the dual of the category of affine sets is a quasivariety, as we explain next.

Given a variety $\A$ and an object $A$ of $\A$, an \emph{affine set} is a set $X$ equipped with a subalgebra $S$ of the algebra $A^X$. A morphism of affine sets $f\colon(X,S)\to(Y,T)$ is a map $f\colon X\to Y$ such that $A^f\colon A^Y\to A^X$ (co)restricts to $T\to S$.

The category \(\ASet\) of affine sets and their morphisms admits a canonical closure operator, called \emph{Zariski-closure}: For an affine set \((X,S)\) and \(M\subseteq X\), the Zariski-closure of \(M\) is given by
\begin{displaymath}
  \overline{M}=\{\mathrm{Eq}(\varphi,\psi)\mid \varphi,\psi\in S, M\subseteq \mathrm{Eq}(\varphi,\psi)\}.
\end{displaymath}
An affine set \((X,S)\) is called \emph{separated} whenever the cone \(S\) is point-separating, that is, for all \(x\neq y\) in \(X\) there is some \(\varphi\in S\) with \(\varphi(x)\neq\varphi(y)\), and it is called \emph{algebraic} if it is separated and Zariski-closed in every separated affine set. The full subcategory of \(\ASet\) defined by all algebraic affine sets is denoted by \(\AASet\). It is shown in \cite{Die96, Die99} that \(\AASet\) is dually equivalent to the category of \emph{functional algebras} over \(A\),
\begin{displaymath}
  \AASet^{\op}\simeq\FcAlgA;
\end{displaymath}
here \(\FcAlgA\) is the full subcategory of \(\A\) defined by those algebras which are subalgebras of powers of \(A\). The category \(\FcAlgA\) is a regular-epireflective subcategory of \(\A\), hence a quasivariety over \(\Set\), which identifies \(\AASet^{\op}\) as a quasivariety.

Using the results of Section \ref{sect:1} it is easy to deduce that also the dual of \(\ASet\) is a (two-sorted) quasivariety. Indeed:
\begin{enumerate}[(a)]
\item The functor $I\colon \Set^\op\to\A$, which assigns to each $X$ the algebra $A^X$ and to each map $f\colon X\to Y$ the morphism $A^f\colon A^Y\to A^X$ with $A^f(g)=g\cdot f$, has as left adjoint the hom-functor $J=\A(-,A)\colon\A\to\Set^\op$.
\item Both $\A$ and $\Set^\op$ are monadic over $\Set$, with respect to the forgetful functor \(\A\to\Set\) and \(\Set(-,2) \colon\Set^{\op}\to\Set\) \cite{Sob92}, respectively.
\item $\ASet$ is dually equivalent to $\A\downarrowtail I$: each object $(X,S)$ can be seen as a monomorphism $SX\hookrightarrow IX$, and a morphism $f\colon(Y,T)\to(X,S)$ as a morphism in $\A\downarrowtail I$:
  \begin{displaymath}
    \begin{tikzcd} % [row sep=large, column sep=large,ampersand replacement=\&]
      S %
      \ar{r}{} %
      \ar{d}[swap]{} %
      & I X %
      \ar{d}{I(f)} \\
      T %
      \ar{r}[swap]{} %
      & IY %
    \end{tikzcd}
  \end{displaymath}
  Conversely, to every monomorphism $m\colon B\to A^X$ one may assign its image $m(B)\hookrightarrow A^X$ and to every morphism $(f,h)$ as in the diagram
  \begin{displaymath}
    \begin{tikzcd} % [row sep=large, column sep=large,ampersand replacement=\&]
      B %
      \ar{r}{m} %
      \ar{d}[swap]{f} %
      & A^{X} %
      \ar{d}{I(h)} \\
      C %
      \ar{r}[swap]{n} %
      & A^{Y} %
    \end{tikzcd}
  \end{displaymath}
  one may assign the morphism $h\colon(Y,n(C))\to(X,m(B))$; since $m$ and $n$ are monomorphisms, $f$ is exactly the (co)restriction of $I(h)$.
\end{enumerate}
Using Proposition \ref{prop} and Theorem \ref{th} we can then conclude that:
\begin{proposition}
  Given a variety $\A$ with constants and an object of $\A$, $\ASet^\op$ is a quasi\-variety over \(\Set\).
\end{proposition}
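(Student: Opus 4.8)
The plan is to recognise $\ASet^\op$ as the category $\A\downarrowtail I$ of Proposition~\ref{prop} and then invoke Theorem~\ref{th} and Proposition~\ref{prop}. Taking $\B=\Set^\op$, item~(a) above exhibits $I\colon\Set^\op\to\A$, $X\mapsto A^X$, as a right adjoint functor, its left adjoint being $J=\A(-,A)$. Thus the hypotheses of Theorem~\ref{th} will be in force as soon as we know that both $\A$ and $\Set^\op$ are varieties admitting constants.

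For $\A$ this is assumed. For $\Set^\op$, item~(b) records (via \cite{Sob92}) that the functor $\Set(-,2)\colon\Set^\op\to\Set$ is monadic, so $\Set^\op$ is a variety; what remains is to check that it \emph{admits constants}, and this is the one genuinely non-formal point, which I expect to be the main (if small) obstacle. I would argue it as follows: the underlying set of an object $X$ of $\Set^\op$ is $\Set(X,2)=2^X$, which is non-empty for every set $X$ (equivalently, $\Set^\op$ is the variety of complete atomic Boolean algebras, whose theory has the constants $0$ and $1$). Hence every algebra has a non-empty underlying set, i.e.\ $\Set^\op$ admits constants; this is precisely the condition ensuring that the functor $G\times H$ corestricts to the subcategory $(\Set\times\Set)^*$ appearing in Theorem~\ref{th}.

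With both hypotheses verified, Theorem~\ref{th} gives that $\A\downarrow I$ is monadic over $\Set$, hence a variety over $\Set$. Proposition~\ref{prop} then presents the full subcategory $\A\downarrowtail I$ of monic $g\colon A\to IB$ as a regular epireflective subcategory of $\A\downarrow I$, so $\A\downarrowtail I$ is a quasivariety over $\Set$. It remains only to transport this along the dual equivalence of item~(c): since $\ASet$ is dually equivalent to $\A\downarrowtail I$, we have $\ASet^\op\simeq\A\downarrowtail I$, and the property of being a quasivariety over $\Set$ is invariant under equivalence of categories. Therefore $\ASet^\op$ is a quasivariety over $\Set$. All the real work sits in Theorem~\ref{th}, Proposition~\ref{prop}, and the identifications of items~(a)--(c); for this proposition only the assembly, together with the constants check for $\Set^\op$, is required.
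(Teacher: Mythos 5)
Your proof is correct and follows essentially the same route as the paper: identify $\ASet^\op$ with $\A\downarrowtail I$ via items (a)--(c), then apply Theorem~\ref{th} and Proposition~\ref{prop}. Your explicit check that $\Set^\op$ (as the variety of complete atomic Boolean algebras) admits constants is a detail the paper leaves implicit, but it does not change the argument.
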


\section{Examples}
\label{sec:examples}

\subsection{Quantale-enriched categories}
\label{sec:quant-enrich-categ}

We recall that a complete ordered set is said to be constructively completely distributive (ccd for short) if the monotone map $\bigvee\colon \mathbb{D}X\to X$, where $\mathbb{D}X$ is the lattice of downsets of $X$, has a left adjoint \cite{FW90}. In the more general context of $V$-categories, for a commutative quantale \(V\), a $V$-category $(X,a)$ is cocomplete if and only if its Yoneda embedding $\yoneda_X\colon X\to PX$ admits a left adjoint $\bigvee_X\colon PX\to X$. As for the case $V=\two$, when $\bigvee_X$ admits a left adjoint $X$ is said to be \emph{constructively completely distributive}, or just ccd. For more information on (completely distributive) \(V\)-categories we refer to \cite{Stu05, Stu06}. We point out that the notion of complete distributivity in this context was introduced in \cite{Stu07} under the designation \emph{totally continuous}. The category $\Vccd$ has as objects the ccd $V$-categories and as morphisms the $V$-functors which preserve both weighted limits and weighted colimits. As shown in \cite{PZ15}, $\Vccd$ is a variety with respect to the canonical forgetful functor \(\Vccd\to\Set\), which in our study above will play the role of the variety $\A$. The role of the algebra $A$ is played by the ccd $V$-category $(V,\hom)$. That is, our functor $I\colon\Set^\op\to\Vccd$ is given by $IX=V^X$. So, in order to conclude that $(\VCat)^\op$ is a quasivariety, it remains to show that \(\VCat\) is equivalent to \(\VSet\).

\begin{proposition}
  The category \(\VCat\) is concretely isomorphic to the category \(\VSet\).
\end{proposition}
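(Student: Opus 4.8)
The plan is to establish a concrete isomorphism between $\VCat$ and $\VSet$ by unwinding both definitions and matching their data. Recall that a $V$-category structure on a set $X$ is a map $a\colon X\times X\to V$ satisfying the unit inequality $k\le a(x,x)$ and the composition inequality $a(x,y)\otimes a(y,z)\le a(x,z)$ (where $k$ is the unit of the quantale and $\otimes$ its multiplication). On the other side, an affine set over $(V,\hom)$ is a set $X$ together with a subalgebra $S$ of $V^X$ in the variety $\Vccd$. So the core of the proof is a dictionary translating the relational structure $a$ into a distinguished sub-$V$-category (sub-ccd object) of $V^X$, and back.

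First I would describe the assignment from $\VCat$ to $\VSet$. Given a $V$-category $(X,a)$, each element $x\in X$ yields the representable $V$-functor $a(x,-)\colon X\to V$, and more generally one can form the set of all contravariant representables, or rather the $V$-subcategory $S\subseteq V^X$ they generate. The natural candidate for $S$ is the set of $V$-functors $X\to V$, i.e.\ the maps $\varphi\colon X\to V$ with $a(x,y)\otimes\varphi(x)\le\varphi(y)$ (or the appropriate variance); one checks this is a sub-ccd object of $V^X=IX$, giving an object $(X,S)$ of $\VSet$. Conversely, from an affine set $(X,S)$ one recovers a $V$-category structure on $X$ by setting $a(x,y)=\bigwedge_{\varphi\in S}\hom(\varphi(x),\varphi(y))$, the largest structure making every $\varphi\in S$ a $V$-functor. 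I would then verify that these two passages are mutually inverse, which amounts to checking that the $V$-functors into $V$ separate points exactly to the extent encoded by $a$, i.e.\ that the double-dualization closes up correctly — this is essentially a Yoneda-type density argument, since the representables $a(x,-)$ already sit inside $S$ and determine $a$.

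For morphisms, the verification should be routine once the object-level correspondence is fixed: a map $f\colon X\to Y$ is a $V$-functor $(X,a)\to(Y,b)$, meaning $b(fx,fx')\ge a(x,x')$, precisely when $V^f=I f$ restricts to a map between the chosen subobjects $S_Y\to S_X$, which is the defining condition for a morphism of affine sets. Since both functors act as the identity on underlying maps, the correspondence is concrete, and checking it is an isomorphism of categories (not merely an equivalence) reduces to confirming the object bijection and that hom-sets coincide as subsets of $\Set(X,Y)$.

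The main obstacle I anticipate is pinning down the exact subobject $S$ associated to $(X,a)$ so that it is genuinely a subalgebra in the variety $\Vccd$ (closed under all the ccd operations, i.e.\ weighted limits and colimits computed in $V^X$), rather than merely a $V$-subcategory. Equivalently, one must show that the set of $V$-functors $X\to V$ is closed under the relevant infinitary operations and that this closedness matches the ``point-separating cone'' data carried by an affine set. Verifying that the round-trip $a\mapsto S\mapsto a$ returns the original structure — i.e.\ that $a(x,y)=\bigwedge_{\varphi}\hom(\varphi(x),\varphi(y))$ with $\varphi$ ranging over $V$-functors — is where the completeness/cocompleteness of $V$ and the enriched Yoneda lemma must be invoked to guarantee enough representables, and this is the step I would treat most carefully.
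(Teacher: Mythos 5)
Your proposal matches the paper's proof in all essentials: the same pair of assignments ($(X,a)\mapsto(X,\VCat(X,V))$ and $(X,S)\mapsto$ the initial structure $a(x,y)=\bigwedge_{\varphi\in S}\hom(\varphi(x),\varphi(y))$), the same identity action on morphisms, and the same key round-trip argument that the representables $a(x,-)$ lie in $S$ by closure under the algebraic (ccd) operations while every $V$-functor $\psi\colon X\to V$ is recovered as $\bigvee_y\psi(y)\otimes a(y,-)$ via Yoneda. The obstacle you flag is exactly the step the paper handles, so the plan is sound and essentially identical to the published proof.
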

\begin{proof}
  We define functors
  \begin{displaymath}
    F \colon\VCat\to\VSet
    \qquad\text{and}\qquad
    G \colon\VSet\to\VCat
  \end{displaymath}
  where \(F\) sends the \(V\)-category \(X\) to the affine set \((X,\VCat(X,V))\), \(G\) sends the affine set \((X,S)\) to the \(V\)-category \(X\) equipped with the initial structure with respect to \(S\), and both functors act identically on morphisms. Since \(V\) is initially dense in \(\VCat\), the composite functor \(GF\) is the identity. Now let \((X,S)\) be an affine set and let \((X,a)\) be the initial \(V\)-category with respect to \(S\). Clearly, \(S\subseteq\VCat(X,V)\). For every \(x\in X\),
  \begin{displaymath}
    a(x,-)=\bigwedge_{\varphi\in S}\hom(\varphi(x),\varphi(-)),
  \end{displaymath}
  hence \(a(x,-)\) belongs to \(S\). If \(\psi \colon(X,a)\to(V,\hom)\) is a \(V\)-functor, then, since
  \begin{displaymath}
    \psi(-)=\bigvee_{y\in X}\psi(y)\otimes a(y,-),
  \end{displaymath}
  \(\psi\in S\), and we conclude that \(S=\VCat(X,V)\). Therefore \(FG\) is the identity functor.
\end{proof}

\begin{proposition}
  Under the equivalence above, an affine set is algebraic if and only if the corresponding \(V\)-category is separated and Cauchy-complete. Hence, the full subcategory \(\VCat_{\mathrm{sep},\mathrm{cc}}\) of \(\VCat\) defined by all separated and Cauchy-complete \(V\)-categories is dually equivalent to a quasivariety over \(\Set\).
\end{proposition}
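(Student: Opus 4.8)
The plan is to split the claimed equivalence into two independent statements --- ``separated iff separated'' and, granting separatedness, ``algebraic iff Cauchy-complete'' --- and then read off the last sentence from Section~2. Throughout I write $k$ for the $\otimes$-unit of $V$ and use that, by the previous proposition, $(X,a)\in\VCat$ corresponds to the affine set $(X,\VCat(X,V))$ and that each representable $a(x,-)\colon X\to V$ lies in $\VCat(X,V)$. For the separatedness part I would argue directly: if $(X,a)$ is separated and $x\neq y$, then (say) $a(x,y)\not\geq k$, and the $V$-functor $a(x,-)$ has $a(x,x)\geq k$ but $a(x,y)\not\geq k$, so it separates $x$ from $y$; conversely, if $x\neq y$ with $a(x,y)\geq k$ and $a(y,x)\geq k$, then every $\varphi\in\VCat(X,V)$ satisfies $\varphi(x)\otimes a(x,y)\leq\varphi(y)$ and symmetrically, forcing $\varphi(x)=\varphi(y)$, so the cone is not point-separating.

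The heart of the matter is a local description of the Zariski closure. Assume $X$ is separated and let it be a full sub-$V$-category of a separated $V$-category $Y$; this is exactly the shape of an embedding of $(X,\VCat(X,V))$ into a separated affine set, since under the isomorphism $F$ the subspace (initial) structure corresponds to fullness of the inclusion. Writing $a$ for the hom of $Y$, I claim that for $y\in Y$,
\[ y\in\overline{X}\iff k\leq\bigvee_{x\in X}a(y,x)\otimes a(x,y). \]
For ``$\Leftarrow$'', if the inequality holds then for every $V$-functor $\varphi\colon Y\to V$ one computes $\varphi(y)=\bigvee_{x\in X}\varphi(x)\otimes a(x,y)$, so $\varphi(y)$ depends only on $\varphi|_X$, whence any two $V$-functors agreeing on $X$ agree at $y$. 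For ``$\Rightarrow$'', the two $V$-functors $a(y,-)$ and $\bigvee_{x\in X}a(y,x)\otimes a(x,-)$ agree on $X$, so $y\in\overline X$ forces them to agree at $y$, which is precisely the displayed inequality. Finally, this inequality says exactly that $\bigl(a(y,-)|_X,\,a(-,y)|_X\bigr)$ is an adjoint pair of $V$-modules, i.e. a Cauchy weight on $X$, the second adjunction inequality $a(x,y)\otimes a(y,x')\leq a(x,x')$ being automatic by fullness.

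With this dictionary both implications are short. If $X$ is Cauchy-complete and $X\hookrightarrow Y$ is as above, then for $y\in\overline X$ the Cauchy weight produced above is representable by some $x_0\in X$; evaluating the representing identities at $x_0$ gives $a(y,x_0)\geq k$ and $a(x_0,y)\geq k$, so $y=x_0\in X$ by separatedness of $Y$, and hence $(X,\VCat(X,V))$ is Zariski-closed in every separated affine set, i.e. algebraic. Conversely, if $(X,\VCat(X,V))$ is algebraic, then given any Cauchy weight on $X$ I would realise it as a point $y$ of the separated Cauchy completion $X_{cc}$, inside which $X$ sits as a full subcategory; by the claim $y\in\overline X$, so algebraicity forces $y\in X$ and the weight is representable, whence $X$ is Cauchy-complete. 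The final sentence then follows by combining this characterisation with the isomorphism $\VCat\cong\VSet$ restricted to the algebraic objects and the fact, recorded in Section~2, that $\VASet^{\op}$ is a quasivariety over $\Set$.

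The step I expect to be the main obstacle is the bookkeeping underlying the local description of the Zariski closure: confirming that embeddings of affine sets into separated ones correspond to full, injective-on-objects $V$-functors; checking that the closure computed from all of $\VCat(Y,V)$ is captured by the single module-adjunction inequality; and verifying that the paper's notion of Cauchy completeness is literally representability of the adjoint pairs $\phi\dashv\psi$ appearing here. Once these identifications are made, the two implications reduce to the short computations indicated above.
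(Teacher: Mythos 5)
Your argument is correct, but it is far more self-contained than the paper's: the proof given in the paper consists, in its entirety, of the citation ``By \cite{HT10}, the Zariski-closure corresponds precisely to the L-closure'', after which the characterisation of the algebraic affine sets as the separated Cauchy-complete \(V\)-categories is the known fact that a separated \(V\)-category is L-closed in every separated \(V\)-category if and only if it is Cauchy-complete. Your central computation --- \(y\in\overline{X}\) iff \(k\leq\bigvee_{x\in X}a(y,x)\otimes a(x,y)\) for \(X\) a full subcategory of a separated \(Y\) --- is precisely the statement that the Zariski closure coincides with the L-closure (the right-hand side is the definition of the latter), so you have in effect reproven the result of Hofmann--Tholen that the paper invokes, and then also reproven the L-closure characterisation of Cauchy completeness via the Cauchy completion. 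The surrounding steps all check out: representables witness separation in both directions; the inequality \(k\leq\bigvee_{x}a(y,x)\otimes a(x,y)\) together with transitivity of \(a\) is exactly adjointness of the pair of modules \(\bigl(a(-,y)|_X,a(y,-)|_X\bigr)\); and for your first flagged bookkeeping point, note that \(\{\varphi|_X:\varphi\in\VCat(Y,V)\}\) is a subalgebra of \(V^X\) inducing the full substructure initially, hence equals \(\VCat(X,V)\) by the preceding proposition, so affine subobjects of separated affine sets do correspond to full sub-\(V\)-categories of separated \(V\)-categories. What the paper's route buys is brevity; what yours buys is a proof readable without consulting \cite{HT10}, at the cost of carrying out the module calculus explicitly.
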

\begin{proof}
  By \cite{HT10}, the Zariski-closure corresponds precisely to the L-closure.
\end{proof}

\subsection{Topological spaces}
\label{sec:topologial-spaces}

The category \(\Top\) of topological spaces and continuous maps is isomorphic to the category \(\mathsf{AfSet}(2)\), for \(\A\) the variety \(\Frm\) of frames and frame homomorphisms and the two-element frame \(2\). The affine algebraic frames correspond precisely to the sober spaces. We note that, equivalently, one might consider here the variety \(\A=\mathsf{coFrm}\) of co-frames and homomorphisms.

The corresponding description of \(\Top^{\op}\) as the two-sorted quasivariety \(\Frm\downarrowtail I\), for the full embedding \(I \colon \mathsf{CABool}\to\Frm\), is presented in \cite{AP97} and constitutes one of the main motivation for this note. Furthermore, it is shown in \cite{AP97} that the slice category \(\Frm\downarrow I\) is equivalent to the category of grids and homomorphisms, linking this way their approach to the one taken in \cite{BP95}.

\subsection{Closure spaces}
\label{sec:closure-spaces}

Let \(\A\) be the category \(\Inf\) of complete lattices and infima-preserving maps, and consider the two-element complete lattice \(2\). Then \(\mathsf{AfSet}(2)\) is isomorphic to the category \(\Cls\) of closure spaces and continuous maps \cite{Die96}. Hence, \(\Cls^{\op}\) is a quasivariety over \(\Set\).

\subsection{Approach spaces}

The category \(\App\) of \emph{approach spaces} and contraction maps was introduced by R.~Lowen in \cite{Low89} (see \cite{Low97} for more details) as a common generalisation of topological spaces and metric spaces. Similar to topological spaces, approach spaces can be equivalently described by convergence, neighborhood systems, and ``metric variants'' of closed (respectively open) sets. As pointed out in \cite{CGL10}, the description via \emph{regular function frames} identifies approach spaces as affine sets for the algebraic theory of \emph{approach frames} (introduced in \cite{BLO06} and further studied in \cite{Olm05, OV10}) and the approach frame \([0,\infty]\); in other words, \(\App\) is concretely isomorphic to the category \(\mathsf{AfSet}([0,\infty])\). Consequently, in analogy to the case of topological spaces, also \(\App^{\op}\) is a quasivariety. It is also shown in \cite{CGL10} that in this context the affine algebraic sets correspond precisely to the sober approach spaces.

Similarly to the case of \(\Top\), the category \(\App^{\op}\) is not exact.

\subsection{A \L{}ukasiewicz variant of approach spaces}

It is shown in \cite{CH03} that the category \(\App\) fits into the framework of \emph{monoidal topology} \cite{HST14}: \(\App\) is concretely isomorphic to the category \(\UVCat\) of \((U,V)\)-categories and \((U,V)\)-functors, for the ultrafilter monad \(U\) on \(\Set\) and the quantale \(V=[0,\infty]\) with monoidal structure given by addition. This begs the question whether the method presented in this note applies to \(\UVCat\) for other quantales \(V\) as well. At this moment we do not have a general answer, but at least for the quantale \(V=[0,1]\) with the \L{}ukasiewicz sum defined by \(u\otimes v=\max\{u+v-1,0\}\) the answer is positive. To explain this, we observe first that the category of complete and finitely cocomplete \(V\)-categories and \(V\)-functors preserving limits and finite colimits can be described by operations and equations: this category is concretely isomorphic to the category of algebras and homomorphisms for a class \(\Omega\) of operation symbols and a class \(E\) of equations (similar to \cite{PT89} and \cite[Remark~2.10]{HN18}). We then take \(\A\) to be the subvariety generated by \([0,1]\), that is, the full subcategory containing precisely the homomorphic image of subobjects of powers of \([0,1]\). By \cite[Corollary~3.17]{HN23}, we have:
\begin{proposition}
  For the quantale \(V=[0,1]\) with the \L{}ukasiewicz sum, the category \(\UVCat\) is concretely isomorphic to the category \(\mathsf{AfSet}([0,1])\), with respect to the variety \(\A\) and the \(V\)-category \(V=[0,1]\). Consequently, \((\UVCat)^{\op}\) is a quasivariety.
\end{proposition}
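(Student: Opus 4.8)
The plan is to read the statement as two claims --- the concrete isomorphism \(\UVCat\cong\mathsf{AfSet}([0,1])\) and the quasivariety property of its dual --- and to isolate all the quantale-specific work in the first, so that the second becomes a formal application of Section~\ref{sect:1} together with the affine-set construction.

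First I would fix the variety \(\A\). It is defined as the subvariety, of the variety of complete and finitely cocomplete \(V\)-categories with \(V\)-functors preserving limits and finite colimits, generated by \([0,1]\); being closed under homomorphic images, subobjects and (small) products it is again a variety over \(\Set\), and \([0,1]\) is one of its objects. The functor \(I\colon\Set^{\op}\to\A\), \(IX=[0,1]^X\), takes values in \(\A\) because each \([0,1]^X\) is a power of the generator; by item (a) of the affine-set construction it is right adjoint, with left adjoint \(\A(-,[0,1])\). For these data \(\mathsf{AfSet}([0,1])\) is the category of sets \(X\) equipped with a subalgebra of \([0,1]^X\). The concrete isomorphism \(\UVCat\cong\mathsf{AfSet}([0,1])\) is then exactly \cite[Corollary~3.17]{HN23}: for the \L{}ukasiewicz quantale a \((U,V)\)-category structure on \(X\) corresponds functorially to the subalgebra of its contractions into \([0,1]\), in complete analogy with the regular-function-frame description that identifies \(\App\) with \(\mathsf{AfSet}([0,\infty])\) in the preceding subsection.

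The consequence is now formal. The one hypothesis of the affine-set Proposition still to be verified is that \(\A\) admits constants; but \([0,1]\), being complete as a \(V\)-category, carries a least and a greatest element (the empty join and meet), and these furnish nullary operations of the theory, so \(\A\) admits constants. Applying Proposition~\ref{prop} and Theorem~\ref{th} to \(\A\) and the object \([0,1]\) therefore shows that \(\mathsf{AfSet}([0,1])^{\op}\simeq\A\downarrowtail I\) is a quasivariety over \(\Set\), and transporting this along the isomorphism of the first part gives that \((\UVCat)^{\op}\) is a quasivariety.

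The main obstacle is the concrete isomorphism, and this is precisely where \cite{HN23} and the special features of the \L{}ukasiewicz sum are indispensable: one needs \([0,1]\) to generate a variety whose powers and subalgebras reproduce exactly the \((U,V)\)-categorical structure, and it is the failure of this for a general quantale \(V\) that the surrounding text flags. Once that identification is granted, the descent to the dual quasivariety is routine, the only point to check by hand being the presence of constants in \(\A\).
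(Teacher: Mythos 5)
Your proposal matches the paper's argument: the paper likewise defines \(\A\) as the subvariety (closed under homomorphic images, subobjects, powers) generated by \([0,1]\) inside the variety of complete and finitely cocomplete \(V\)-categories, invokes \cite[Corollary~3.17]{HN23} for the concrete isomorphism \(\UVCat\cong\mathsf{AfSet}([0,1])\), and then deduces the quasivariety statement from the general results of Section~\ref{sect:1} via the affine-set construction. Your explicit check that \(\A\) admits constants (via the top and bottom elements of the complete \(V\)-category \([0,1]\)) is a small but welcome addition that the paper leaves implicit.
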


% \bibliographystyle{siam}

% \bibliography{bibliography_dirk}

\end{document}